\documentclass{amsart}
\usepackage{amsfonts,amssymb,amsmath}

\newtheorem{theorem}{Theorem}[section]
\newtheorem{lemma}[theorem]{Lemma}
\newtheorem{proposition}[theorem]{Proposition}
\newtheorem{corollary}[theorem]{Corollary}
\theoremstyle{definition}
\newtheorem{definition}[theorem]{Definition}
\newtheorem{example}[theorem]{Example}
\newtheorem{remark}[theorem]{Remark}

\begin{document}

\title[Separating maps on $\mathrm{AC(X,E)}$]{\bf Separating maps between spaces of vector-valued
absolutely continuous functions}

\author{Luis Dubarbie}

\address{Departamento de Matem\'aticas, Estad\'istica y
Computaci\'on, Facultad de Ciencias, Universidad de Cantabria,
Avenida de los Castros s/n, E-39071, Santander, Spain.}

\email{dubarbiel@unican.es}

\thanks{Research supported by the Spanish Ministry
of Science and Education (MTM2006-14786) and by a predoctoral grant from
the University of Cantabria and the Goverment of Cantabria.}

\keywords{Separating maps, disjointness preserving, vector-valued
absolutely continuous functions, automatic continuity.}

\subjclass[2000]{Primary 47B38, Secondary 46E15, 46E40, 46H40,
47B33}

\begin{abstract}
In this paper we give a description of separating or disjointness
preserving linear bijections on spaces of vector-valued absolutely
continuous functions defined on compact subsets of the real line. We
obtain that they are continuous and biseparating in the
finite-dimensional case. The infinite-dimensional case is also
studied.
\end{abstract}

\maketitle

\section{Introduction}

In \cite{P}, it was obtained a characterization of linear isometries
between spaces of scalar-valued absolutely continuous functions
defined on compact subsets of the real line. In this paper, we are
interested in obtaining a complete description of maps which
preserve disjointness on spaces of vector-valued absolutely
continuous functions also defined on compact subsets of the real
line. These maps are usually called separating or disjointness
preserving.

\medskip
It is well known that separating linear maps between spaces of
scalar-valued continuous functions defined on compact or locally
compact spaces are automatically continuous and that there exists a
homeomorphism between the underlying spaces (\cite{FH2}, \cite{J},
\cite{JW}). In a more general context, J. J. Font proved that a
separating linear bijection between regular Banach function algebras
which satisfy Ditkin's condition is continuous and their structure
spaces are homeomorphic (\cite{F}).

\medskip
For spaces of vector-valued continuous functions, it is necessary to
require that the inverse map be also separating to obtain a similar
characterization. If a bijective map and its inverse are separating
we call it biseparating. There are several papers which deal with
such maps on spaces of continuous functions and results about
automatic continuity and topological links between the underlying
spaces are obtained (see \cite{AK}, \cite{A1}, \cite{A2}, \cite{AJ},
\cite{GJW} and \cite{HBN}). Nevertheless, we do not know much about
separating maps on spaces of vector-valued continuous functions.
Namely, in spaces of continuous functions vanishing at infinity,
just one result of automatic continuity was given by J. Araujo in
\cite[Theorem 5.4]{A3} for spaces with finite dimension.

\medskip
In this paper, we study bijective and separating linear maps between
spaces of absolutely continuous functions defined on compact subsets
of the real line and taking values in arbitrary Banach spaces. We
obtain a description of such maps which allows us to prove that
their inverses are also separating and to deduce their automatic
continuity in the finite-dimensional case. Besides we show, with an
example, that it is not possible to obtain an analogue result when
we deal with Banach spaces of infinite dimension.  For this reason,
we consider biseparating maps in that case.

\subsection*{Preliminaries and notation}

$X$ and $Y$ will be compact subsets of the real line and $E$ and $F$
will be arbitrary $\mathbb{K}$-Banach spaces, where $\mathbb{K}$
denotes the field of real or complex numbers.\\
If $A$ is a subset of $X$, then $\mathrm{int}(A)$ denotes the
interior of $A$ in $X$, $\mathrm{cl}(A)$ denotes its closure and
$\mathrm{bd}(A)$ its boundary. On the other hand, $\chi_{A}$ denotes
the characteristic function on $A$. Finally, we call \emph{partition}
of $A\subset X$ to any finite family $\{x_{i}\}_{i=0}^{n}$
of points of $A$ which satisfies $x_{0}<x_{1}<\ldots <x_{n}$.\\
Given a function $f:X\rightarrow E$, we define the \emph{cozero set}
of $f$ as $c(f):=\{x\in X: f(x)\neq 0\}$.
Also, for any $x\in X$, we denote by $\delta_{x}$ the functional
\emph{evaluation at the point $x$}, and finally, if $e\in E$, then
$\hat{e}$ will be the constant function from
$X$ to $E$ taking the value $e$.\\
Throughout this paper the word ``homeomorphism" will be synonymous
with ``surjective homeomorphism".

\subsection*{Definitions and previous results}

The space of absolutely continuous functions has been usually
studied in the scalar context, that is, when the functions take real
or complex values (see \cite[Section 18]{HS}). In this part of the
paper, we study it when the functions take values in arbitrary
Banach spaces.

\begin{definition}
A function $f:X\rightarrow E$ is said to be \emph{absolutely
continuous} on $X$ if, given any $\varepsilon>0$, there exists an
$\delta>0$ such that
\begin{displaymath}
\sum_{i=1}^{n}\|f(b_{i})-f(a_{i})\|<\varepsilon
\end{displaymath}
for each finite family of non-overlapping open intervals
$\{(a_{i},b_{i})\}_{i=1}^{n}$ whose extreme points belong to $X$
with
\begin{displaymath}
\sum_{i=1}^{n}(b_{i}-a_{i})<\delta.
\end{displaymath}
$AC(X,E)$ will denote the space of $E$-valued absolutely continuous
functions on $X$. When $E=\mathbb{K}$, we will consider
$AC(X):=AC(X,\mathbb{K})$.
\end{definition}

\begin{definition}
Given $f\in AC(X,E)$, we define the \emph{variation} of $f$ on $X$
as
\begin{displaymath}
V(f;X):=\mathrm{sup}\left\{\sum_{i=1}^n\|f(x_{i})-f(x_{i-1})\|:
\{x_{i}\}_{i=0}^{n}\ \mathrm{is\ a\ partition\ of}\ X,\ n\in
\mathbb{N}\right\}.
\end{displaymath}
\end{definition}

Throughout the paper we will consider $AC(X,E)$
endowed with the norm $\|.\|_{AC}$ defined by:
\begin{center}
$\|f\|_{AC}:=\|f\|_{\infty}+V(f;X)$, for each $f\in AC(X,E)$,
\end{center}
where $\|.\|_{\infty}$ denotes the supremum norm.

\medskip
The next lemmas, whose proofs are straightforward, show some
properties of the space of absolutely continuous functions which
are the key tools to prove some further results. In particular,
Lemma 1.5 proves, for $AC(X)$, the existence of a partition of the
unity (see \cite[Lemma 1]{FH1}).

\begin{lemma}
$(AC(X,E),\|.\|_{AC})$ is a Banach space.
\end{lemma}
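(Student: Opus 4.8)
The plan is to verify first that $\|\cdot\|_{AC}$ is genuinely a norm, then establish completeness by the standard Cauchy-sequence argument. That $\|\cdot\|_{AC}$ is a seminorm follows from the fact that both $\|\cdot\|_\infty$ and $V(\cdot\,;X)$ are seminorms on the linear space of functions for which they are finite: positive homogeneity is immediate from the definitions, and subadditivity of $V(\cdot\,;X)$ follows termwise in each partition sum from the triangle inequality in $E$, after which one takes suprema. Definiteness is clear since $\|f\|_{AC}\geq\|f\|_\infty$, so $\|f\|_{AC}=0$ forces $f\equiv 0$. One should also note that $AC(X,E)$ is indeed a linear subspace of $E^X$: sums and scalar multiples of absolutely continuous functions are absolutely continuous directly from the $\varepsilon$–$\delta$ definition, and every $f\in AC(X,E)$ has finite variation (choosing $\varepsilon=1$ gives a $\delta$, partition $X$ into finitely many pieces of length $<\delta$, and bound $V(f;X)$ by that number of pieces).

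For completeness, I would take a Cauchy sequence $(f_k)$ in $(AC(X,E),\|\cdot\|_{AC})$. Since $\|f_j-f_k\|_\infty\leq\|f_j-f_k\|_{AC}$, the sequence is uniformly Cauchy, hence converges uniformly to some $f:X\to E$ (using completeness of $E$ pointwise and uniformity to get the limit function well-defined); $f$ is at least continuous. The key point is to show $f\in AC(X,E)$ and $\|f_k-f\|_{AC}\to 0$. For the variation part: fix a partition $\{x_i\}_{i=0}^n$ of $X$ and observe $\sum_{i=1}^n\|(f_j-f_k)(x_i)-(f_j-f_k)(x_{i-1})\|\leq V(f_j-f_k;X)\leq\|f_j-f_k\|_{AC}$; letting $k\to\infty$ and using pointwise convergence gives $\sum_{i=1}^n\|(f_j-f)(x_i)-(f_j-f)(x_{i-1})\|\leq\limsup_k\|f_j-f_k\|_{AC}$, and taking the supremum over partitions yields $V(f_j-f;X)\leq\limsup_k\|f_j-f_k\|_{AC}$, which is small for large $j$ by the Cauchy property. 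In particular $V(f_j-f;X)<\infty$ for some (hence, by subadditivity, every) $j$, so $f=f_j-(f_j-f)$ has finite variation; combined with continuity and the uniform bound this also gives absolute continuity of $f$ once we note $f_j-f$ is absolutely continuous — alternatively, one checks absolute continuity of $f$ directly from the $\varepsilon$–$\delta$ condition by splitting $f=f_j+(f-f_j)$ and controlling the tail through $V(f-f_j;X)$. Finally, $\|f_j-f\|_{AC}=\|f_j-f\|_\infty+V(f_j-f;X)\to 0$.

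The main obstacle, such as it is, is the interchange of limits in the variation estimate: one must be careful to fix the partition \emph{first}, pass to the limit in $k$ within that finite sum (which is legitimate by pointwise convergence of finitely many terms), and only \emph{then} take the supremum over all partitions — doing it in the other order would not be justified. Everything else is routine, which is why the paper calls the proof straightforward; a reader who has seen the scalar case in \cite[Section 18]{HS} will find the vector-valued argument identical word for word, with absolute values replaced by the norm of $E$ and completeness of $\mathbb{K}$ replaced by completeness of $E$.
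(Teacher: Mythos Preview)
Your proposal is correct and is precisely the standard argument; the paper itself omits the proof entirely, remarking only that it is ``straightforward'' and pointing to the scalar case in \cite[Section~18]{HS}, so there is nothing to compare against beyond noting that your outline is exactly what that reference does with $|\cdot|$ replaced by $\|\cdot\|_E$. One very small caveat: in the step showing $V(f;X)<\infty$, the phrase ``partition $X$ into finitely many pieces of length $<\delta$'' is not literally possible when $X$ has gaps of length $\ge\delta$, but the fix is immediate---bound the contribution of each such gap by $2\|f\|_\infty$ and observe there are at most $\mathrm{diam}(X)/\delta$ of them.
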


\begin{lemma}
Let $f\in AC(X)$ and $g\in AC(X,E)$, then $f\cdot g\in AC(X,E)$.
\end{lemma}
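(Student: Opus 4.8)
The plan is to reduce the assertion to the definition of absolute continuity by means of the elementary product identity
\[
u(b)v(b) - u(a)v(a) = \bigl(u(b)-u(a)\bigr)v(b) + u(a)\bigl(v(b)-v(a)\bigr),
\]
applied with $u=f$, $v=g$ on each interval of a given non-overlapping family. First I would observe that $f\cdot g$ is a well-defined function from $X$ to $E$, given by $(f\cdot g)(x)=f(x)g(x)$, and that both $f$ and $g$ are bounded: taking a single interval $(a,b)$ with $b-a<\delta$ in the definition shows that $f$ and $g$ are (uniformly) continuous on $X$, so, $X$ being compact, there are constants $M_{f},M_{g}$ with $\|f\|_{\infty}\le M_{f}$ and $\|g\|_{\infty}\le M_{g}$ (alternatively one quotes that an absolutely continuous function on a compact subset of $\mathbb{R}$ has finite variation and is hence bounded).

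Next, for any finite family of non-overlapping open intervals $\{(a_{i},b_{i})\}_{i=1}^{n}$ with extreme points in $X$, the identity above together with the triangle inequality gives, for each $i$,
\[
\|(f\cdot g)(b_{i})-(f\cdot g)(a_{i})\|\le |f(b_{i})-f(a_{i})|\,\|g(b_{i})\| + |f(a_{i})|\,\|g(b_{i})-g(a_{i})\| \le M_{g}\,|f(b_{i})-f(a_{i})| + M_{f}\,\|g(b_{i})-g(a_{i})\|,
\]
and, summing over $i$,
\[
\sum_{i=1}^{n}\|(f\cdot g)(b_{i})-(f\cdot g)(a_{i})\| \le M_{g}\sum_{i=1}^{n}|f(b_{i})-f(a_{i})| + M_{f}\sum_{i=1}^{n}\|g(b_{i})-g(a_{i})\|.
\]

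Finally, given $\varepsilon>0$, I would apply the definition of absolute continuity to $f$ with $\varepsilon/\bigl(2(M_{g}+1)\bigr)$ to obtain $\delta_{f}>0$, and to $g$ with $\varepsilon/\bigl(2(M_{f}+1)\bigr)$ to obtain $\delta_{g}>0$; setting $\delta:=\min\{\delta_{f},\delta_{g}\}$, whenever $\sum_{i=1}^{n}(b_{i}-a_{i})<\delta$ each of the two sums on the right-hand side of the last display is controlled and the total is strictly less than $\varepsilon$, which is precisely absolute continuity of $f\cdot g$; since $f\cdot g$ also takes values in $E$, we conclude $f\cdot g\in AC(X,E)$. (The $+1$ in the denominators avoids having to treat separately the degenerate cases $f\equiv 0$ or $g\equiv 0$.) There is no genuine obstacle here — as the paper itself remarks, the argument is straightforward; the only points meriting a word of care are the boundedness of $f$ and $g$ and the bookkeeping of the constants in the final choice of $\delta$.
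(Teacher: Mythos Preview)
Your argument is correct and is exactly the straightforward computation the paper has in mind; indeed, the paper omits the proof of this lemma entirely, and the same product-splitting estimate you use appears verbatim later in the proof of Proposition~2.7 when bounding $V(f_{A}g_{n};U_{n})$. Nothing needs to be changed.
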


\begin{lemma}
Let $\{V_{i}\}_{i=1}^{n}$ be an open covering of $X$. Then, there
exist $\{f_{i}\}_{i=1}^{n}\subset AC(X)$ with $0\leq f_{i}\leq 1$
and $c(f_{i})\subset V_{i}$, for $i=1,\ldots,n$ such that
$\sum_{i=1}^{n}f_{i}=1$.
\end{lemma}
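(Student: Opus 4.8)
The plan is to build the functions $f_i$ explicitly out of distance functions, exploiting the fact that $X$ lies inside $\mathbb{R}$, and then to normalize using Lemma 1.4. First I would dispose of the degenerate case: if some $V_i=X$, simply put $f_i:=1$ and $f_j:=0$ for $j\neq i$; these lie in $AC(X)$ and obviously satisfy all requirements. So from now on assume $X\setminus V_i\neq\emptyset$ for every $i$.

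Next, since each $V_i$ is relatively open in $X$ and $X$ is closed in $\mathbb{R}$, the set $X\setminus V_i$ is a nonempty closed subset of $\mathbb{R}$, so I would define $g_i:X\rightarrow\mathbb{R}$ by $g_i(x):=\mathrm{dist}(x,X\setminus V_i)$. This function is $1$-Lipschitz on $X$, and a Lipschitz function on $X$ is absolutely continuous (given $\varepsilon>0$, take $\delta:=\varepsilon/L$ with $L$ a Lipschitz constant), so $g_i\in AC(X)$. Moreover $g_i\geq 0$ and $g_i(x)>0$ exactly when $x\notin X\setminus V_i$, that is, $c(g_i)=V_i$. I would then set $g:=\sum_{i=1}^{n}g_i$, which is a finite sum of elements of $AC(X)$ and hence lies in $AC(X)$; since $\{V_i\}_{i=1}^{n}$ covers $X$, every $x\in X$ is in some $V_i$, so $g(x)\geq g_i(x)>0$, and by continuity together with compactness of $X$ there is $m>0$ with $g\geq m$ on $X$. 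As $g$ is $n$-Lipschitz, one gets $|1/g(x)-1/g(y)|\leq |g(x)-g(y)|/m^{2}\leq (n/m^{2})|x-y|$, so $1/g$ is Lipschitz and therefore $1/g\in AC(X)$.

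Finally I would define $f_i:=g_i\cdot(1/g)$ for $i=1,\dots,n$. By Lemma 1.4 (with $E=\mathbb{K}$), each $f_i$ belongs to $AC(X)$. From $0\leq g_i\leq g$ we obtain $0\leq f_i\leq 1$; from $c(f_i)=c(g_i)=V_i$ we get $c(f_i)\subset V_i$; and $\sum_{i=1}^{n}f_i=g^{-1}\sum_{i=1}^{n}g_i=1$. This is the required partition of unity.

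The only points that need care are the empty-complement case and the passage from ``$V_i$ relatively open in $X$'' to ``$X\setminus V_i$ closed in $\mathbb{R}$'' so that the distance function is defined, continuous, and has cozero set exactly $V_i$. There is no genuine obstacle: absolute continuity throughout is already delivered by Lipschitz estimates, which is consistent with the lemma being ``straightforward.'' (If one prefers to avoid Lemma 1.4, the estimate above shows directly that $1/g$ is Lipschitz, and then so is each $f_i=g_i/g$, hence $f_i\in AC(X)$.)
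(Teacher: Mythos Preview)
Your argument is correct. The paper does not actually supply a proof of this lemma: it declares the result ``straightforward'' and points to \cite{FH1} for an analogue, so there is no approach to compare against. Your construction --- distance functions $g_i(x)=\mathrm{dist}(x,X\setminus V_i)$, which are $1$-Lipschitz hence in $AC(X)$, followed by normalization by their sum $g$, shown to have $1/g$ Lipschitz via the lower bound $g\geq m>0$ on the compact $X$ --- is the standard route and all the verifications (cozero sets, the $0\leq f_i\leq 1$ bounds, the handling of the case $V_i=X$) are in order. The only cosmetic remark is that the appeal to Lemma~1.4 is, as you yourself note, avoidable: each $f_i=g_i/g$ is a product of bounded Lipschitz functions on $X$ and hence itself Lipschitz, so $f_i\in AC(X)$ directly.
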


\section{Separating maps}

\begin{definition}
A map $T:AC(X,E)\rightarrow AC(Y,F)$ is said to be \emph{separating}
if it is linear and $c(Tf)\cap c(Tg)=\emptyset$ whenever $f,g\in
AC(X,E)$ satisfy $c(f)\cap c(g)=\emptyset$.\\
Equivalently, a linear map $T:AC(X,E)\rightarrow AC(Y,F)$ is
\emph{separating} if $\|Tf(y)\|\|Tg(y)\|=0$, for all $y\in Y$,
whenever $f,g\in AC(X,E)$ satisfy $\|f(x)\|\|g(x)\|=0$, for all
$x\in X$.\\
Also, $T$ is said to be \emph{biseparating} if it is bijective and
both $T$ and $T^{-1}$ are separating.
\end{definition}

From now on we will assume that $T:AC(X,E)\rightarrow AC(Y,F)$ is a
separating and bijective map unless otherwise stated.

\begin{definition}
For each $y\in Y$, we define the map $\delta_{y}\circ T:AC(X,E)\rightarrow F$ as
\begin{center}
$(\delta_{y}\circ T)(f):=Tf(y)$, for each $f\in AC(X,E)$.
\end{center}
Therefore, the \emph{support set} associated to
$\delta_{y}\circ T$ is defined by
\begin{center}
$\mathrm{supp}(\delta_{y}\circ T):=\{x\in X:\forall U$ open
neighborhood of $x, \exists f\in AC(X,E)$ with $c(f)\subseteq U$ and
$Tf(y)\neq 0\}.$
\end{center}
\end{definition}

For more details about the next three lemmas see the references
\cite{FH1,J}.

\begin{lemma}
The set $\mathrm{supp}(\delta_{y}\circ T)$ is a singleton for every
$y\in Y$.
\end{lemma}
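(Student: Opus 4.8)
The statement splits into two parts: first that $\mathrm{supp}(\delta_{y}\circ T)$ is nonempty, and second that it contains at most one point. For the nonemptiness, I would argue by contradiction. Suppose $\mathrm{supp}(\delta_{y}\circ T)=\emptyset$. Then every $x\in X$ admits an open neighborhood $U_{x}$ such that $Tf(y)=0$ for all $f\in AC(X,E)$ with $c(f)\subseteq U_{x}$. By compactness of $X$, finitely many of these, say $U_{x_{1}},\ldots,U_{x_{n}}$, cover $X$. Apply Lemma 1.5 to this open covering to get $\{f_{i}\}_{i=1}^{n}\subseteq AC(X)$ with $0\le f_{i}\le 1$, $c(f_{i})\subseteq U_{x_{i}}$ and $\sum_{i=1}^{n}f_{i}=1$. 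Given any $f\in AC(X,E)$, Lemma 1.4 gives $f_{i}\cdot f\in AC(X,E)$, and since $c(f_{i}\cdot f)\subseteq c(f_{i})\subseteq U_{x_{i}}$ we get $T(f_{i}\cdot f)(y)=0$ for each $i$. By linearity, $Tf(y)=T\bigl(\sum_{i=1}^{n}f_{i}\cdot f\bigr)(y)=\sum_{i=1}^{n}T(f_{i}\cdot f)(y)=0$. Thus $\delta_{y}\circ T$ is identically zero, contradicting the surjectivity of $T$: choosing $h\in AC(Y,F)$ with $h(y)\ne 0$ and writing $h=Tf$, we would have $Tf(y)=h(y)\ne 0$.

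For the second part, suppose two distinct points $x_{1},x_{2}$ lie in $\mathrm{supp}(\delta_{y}\circ T)$. Since $X\subseteq\mathbb{R}$ is Hausdorff, pick disjoint open neighborhoods $U_{1}\ni x_{1}$ and $U_{2}\ni x_{2}$. By the definition of the support set there exist $f_{1},f_{2}\in AC(X,E)$ with $c(f_{j})\subseteq U_{j}$ and $Tf_{j}(y)\ne 0$ for $j=1,2$. But $c(f_{1})\cap c(f_{2})\subseteq U_{1}\cap U_{2}=\emptyset$, so the separating property of $T$ forces $c(Tf_{1})\cap c(Tf_{2})=\emptyset$; in particular $\|Tf_{1}(y)\|\,\|Tf_{2}(y)\|=0$, which contradicts $Tf_{1}(y)\ne 0$ and $Tf_{2}(y)\ne 0$. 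Hence the support contains at most one point, and together with nonemptiness it is a singleton.

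I do not anticipate a serious obstacle here; the argument is routine once the right tools are in place. The only point requiring a little care is the nonemptiness step, where one must combine compactness, the $AC(X)$-partition of unity from Lemma 1.5, and the module property from Lemma 1.4 to reduce an arbitrary $f$ to a finite sum of functions with small cozero sets; the contradiction with surjectivity then closes it. The uniqueness step is an immediate consequence of the separating hypothesis applied to functions supported in disjoint neighborhoods.
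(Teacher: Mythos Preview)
Your argument is correct. The paper itself does not supply a proof of this lemma but refers the reader to \cite{FH1,J}; your two-step argument (nonemptiness via compactness, the partition of unity of Lemma~1.5, the module property of Lemma~1.4, and the surjectivity of $T$; uniqueness via the separating hypothesis applied to functions with cozeros in disjoint neighborhoods) is precisely the standard proof found in those sources, so there is no divergence to discuss.
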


\begin{definition}
The previous lemma allows us to define a map $h:Y\rightarrow X$ such
that $h(y)$ is the only point that belongs to
$\mathrm{supp}(\delta_{y}\circ T)$, for each $y\in Y$. We call $h$
the \emph{support map} of $T$.
\end{definition}

\begin{lemma}
Given $f\in AC(X,E)$ such that $f\equiv 0$ on an open subset $U$ of
$X$, then $Tf\equiv 0$ on $h^{-1}(U)$.
\end{lemma}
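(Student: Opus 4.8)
Proof proposal for Lemma 2.6 (the statement: if $f \equiv 0$ on an open $U \subseteq X$, then $Tf \equiv 0$ on $h^{-1}(U)$).

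The plan is to argue by contradiction, exploiting the definition of the support map together with the partition-of-unity lemma. Suppose there is a point $y_0 \in h^{-1}(U)$ with $Tf(y_0) \neq 0$; then $h(y_0) \in U$. Since $U$ is an open neighborhood of $h(y_0)$, and since $h(y_0)$ is by definition the unique point of $\mathrm{supp}(\delta_{y_0}\circ T)$, one should expect that functions whose cozero set sits inside $U$ are precisely the ones that ``detect'' $y_0$, while functions vanishing on a neighborhood of $h(y_0)$ cannot. The obstacle is that $f$ is not assumed to vanish on a neighborhood of $h(y_0)$ in a way that immediately contradicts membership in the support; rather, $f$ vanishes on $U$, which contains $h(y_0)$, so intuitively $f$ carries ``no mass near $h(y_0)$.'' I need to turn this into a rigorous statement about $Tf(y_0)$.

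The key step is to separate $X$ into the region where $f$ lives and a neighborhood of $h(y_0)$. Concretely: since $c(f) \subseteq X \setminus U$ and $h(y_0) \in U$, choose an open set $V$ with $h(y_0) \in V \subseteq \mathrm{cl}(V) \subseteq U$ (possible since $X$ is a metric space and $U$ open). Then $\{V, X \setminus \mathrm{cl}(V)\}$ — or rather a slight enlargement to a genuine open cover, say $\{V, X\setminus \{h(y_0)\}\}$ intersected appropriately, or more cleanly $\{U, X \setminus \mathrm{cl}(V)\}$ — is an open cover of $X$. Apply Lemma 1.5 to get $g_1, g_2 \in AC(X)$ with $g_1 + g_2 = 1$, $0 \le g_i \le 1$, $c(g_1) \subseteq U$ and $c(g_2) \subseteq X \setminus \mathrm{cl}(V)$. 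By Lemma 1.4, $g_1 f$ and $g_2 f$ lie in $AC(X,E)$, and $f = g_1 f + g_2 f$. Now $g_1 f \equiv 0$ because $c(g_1) \subseteq U$ and $f \equiv 0$ on $U$, so $f = g_2 f$, which gives $c(f) \subseteq c(g_2) \subseteq X \setminus \mathrm{cl}(V)$. Thus $f$ itself vanishes on the open neighborhood $V$ of $h(y_0)$.

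It then remains to show that any $f \in AC(X,E)$ vanishing on an open neighborhood $V$ of $h(y_0)$ must satisfy $Tf(y_0) = 0$; this is really the heart of Lemma 2.6 and is where the definition of $\mathrm{supp}(\delta_{y_0}\circ T)$ and of $h$ get used in full force. Here I would again split: cover $X$ by $V$ and $W := X \setminus \mathrm{cl}(V')$ for a smaller open $V'$ with $h(y_0) \in V' \subseteq \mathrm{cl}(V') \subseteq V$, obtain via Lemma 1.5 a partition $g_1 + g_2 = 1$ with $c(g_1)\subseteq V$, $c(g_2) \subseteq W$, and write $f = g_1 f + g_2 f$. Since $f \equiv 0$ on $V \supseteq c(g_1)$ we get $g_1 f \equiv 0$, so $f = g_2 f$ with $c(f) \subseteq W \subseteq X \setminus \{h(y_0)\}$, i.e. $c(f)$ avoids a neighborhood of the single support point. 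The definition of the support set says that for $x = h(y_0)$ there is an open neighborhood — namely the complement of $\mathrm{cl}(c(f))$ within a suitable open set, or more directly $V'$ — such that \emph{every} $g$ with $c(g)$ inside it has $Tg(y_0) = 0$ ... but that is the \emph{negation} applied to points \emph{outside} the support. I expect the cleanest route is: $\mathrm{supp}(\delta_{y_0}\circ T) = \{h(y_0)\}$ means every point $x \neq h(y_0)$ fails the support condition, hence has an open neighborhood $U_x$ with $Tg(y_0) = 0$ for all $g$ supported in $U_x$. Cover the compact set $\mathrm{cl}(c(f))$ (which lies in $X \setminus \{h(y_0)\}$) by finitely many such $U_{x_1}, \dots, U_{x_m}$, take a subordinate partition of unity $\{p_j\}$ from Lemma 1.5, write $f = \sum_j p_j f$ with $c(p_j f) \subseteq U_{x_j}$, and conclude $Tf(y_0) = \sum_j T(p_j f)(y_0) = 0$ by linearity. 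The main obstacle is precisely this last bookkeeping — correctly invoking the \emph{failure} of the support condition at each point of a compact set and patching the pieces via linearity and the partition of unity — everything else is routine manipulation with Lemmas 1.4 and 1.5.
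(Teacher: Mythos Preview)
Your argument is correct and is essentially the standard one the paper defers to via the references \cite{FH1,J}: cover $\mathrm{cl}(c(f))$ by finitely many neighborhoods on which the support condition fails at $y_0$, complete to a cover of $X$ (by throwing in $U$ or $X\setminus\mathrm{cl}(c(f))$, which you should make explicit since Lemma~1.5 requires a cover of all of $X$), take a subordinate partition of unity, and use linearity. One remark on presentation: your first reduction---shrinking $U$ to some $V$ with $h(y_0)\in V\subseteq\mathrm{cl}(V)\subseteq U$---is unnecessary, since $U$ itself is already an open neighborhood of $h(y_0)$ on which $f$ vanishes; you can pass directly to the compactness/partition-of-unity step.
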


\begin{lemma}
The support map $h$ is continuous and onto.
\end{lemma}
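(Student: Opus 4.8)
The plan is to prove the two assertions separately, establishing continuity first and then using it to obtain surjectivity.

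\emph{Continuity.} Since $X$ and $Y$ are compact metric spaces, it suffices to argue with sequences. Suppose $h$ fails to be continuous at some $y_{0}\in Y$: then there are $y_{n}\to y_{0}$ and $\varepsilon>0$ with $|h(y_{n})-h(y_{0})|\geq\varepsilon$ for all $n$, and, passing to a subsequence by compactness of $X$, we may assume $h(y_{n})\to x_{1}$ for some $x_{1}\neq x_{0}:=h(y_{0})$. Choose disjoint open neighborhoods $U_{0}$ of $x_{0}$ and $U_{1}$ of $x_{1}$. Because $x_{0}\in\mathrm{supp}(\delta_{y_{0}}\circ T)$, there is $f\in AC(X,E)$ with $c(f)\subseteq U_{0}$ and $Tf(y_{0})\neq 0$. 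Since $c(f)\subseteq U_{0}$ and $U_{0}\cap U_{1}=\emptyset$, the function $f$ vanishes on the open set $U_{1}$, so the preceding lemma gives $Tf\equiv 0$ on $h^{-1}(U_{1})$. As $h(y_{n})\in U_{1}$ for $n$ large, we get $Tf(y_{n})=0$ eventually; but $Tf\in AC(Y,F)$ is continuous and $y_{n}\to y_{0}$, whence $Tf(y_{0})=\lim_{n}Tf(y_{n})=0$, a contradiction. Therefore $h$ is continuous.

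\emph{Surjectivity.} Since $h$ is continuous and $Y$ compact, $h(Y)$ is compact, hence closed in $X$. Assume, for contradiction, that there is $x_{0}\in X\setminus h(Y)$. Using that $X$ is metrizable and $h(Y)$ is closed with $x_{0}\notin h(Y)$, fix an open neighborhood $W$ of $x_{0}$ with $\mathrm{cl}(W)\cap h(Y)=\emptyset$. Applying Lemma 1.5 to the open covering $\{W,\,X\setminus\{x_{0}\}\}$ of $X$ produces $\varphi\in AC(X)$ with $0\leq\varphi\leq 1$, $c(\varphi)\subseteq W$ and $\varphi(x_{0})=1$; picking any nonzero $e\in E$ and setting $f:=\varphi\cdot\hat{e}$, Lemma 1.4 yields $f\in AC(X,E)$ with $f\neq 0$ and $c(f)\subseteq W$. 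For an arbitrary $y\in Y$ we have $h(y)\notin\mathrm{cl}(W)\supseteq\mathrm{cl}(c(f))$, so $f$ vanishes on the open set $X\setminus\mathrm{cl}(c(f))$, which contains $h(y)$; the preceding lemma then gives $Tf(y)=0$. Since $y$ was arbitrary, $Tf=0$, contradicting the injectivity of $T$ together with $f\neq 0$. Hence $h$ is onto.

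\emph{Main obstacle.} The continuity half is essentially formal once one combines the singleton support-set description of $h$ with the vanishing lemma ($f\equiv 0$ on $U$ $\Rightarrow$ $Tf\equiv 0$ on $h^{-1}(U)$) and the continuity of elements of $AC(Y,F)$. The delicate point is the surjectivity half: one must separate $x_{0}$ from $h(Y)$ not merely by an open set but by one \emph{whose closure still misses} $h(Y)$, since this is exactly what allows the vanishing lemma to be applied with $U=X\setminus\mathrm{cl}(c(f))$; this is where compactness of $Y$ (to make $h(Y)$ closed) and metrizability of $X$ enter, and one also needs Lemmas 1.4 and 1.5 to guarantee that a nonzero absolutely continuous $E$-valued function supported in the prescribed open set exists, so that injectivity of $T$ can be invoked.
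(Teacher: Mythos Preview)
Your proof is correct. The paper itself does not supply an argument for this lemma but refers the reader to \cite{FH1,J}; your proof is precisely the standard one found in those references, combining the definition of $\mathrm{supp}(\delta_{y}\circ T)$ with Lemma~2.5 and the continuity of $Tf$ for the first half, and injectivity of $T$ together with the closedness of $h(Y)$ for the second half.
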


\begin{proposition}
Let $f\in AC(X,E)$ such that $f(h(y))=0$. Then, the following statements
hold:
\begin{enumerate}
\item If $\mathrm{bd}(h^{-1}(h(y)))=\emptyset$, then $Tf\equiv 0$ on
$h^{-1}(h(y))$.
\item If $\mathrm{bd}(h^{-1}(h(y)))\neq \emptyset$,
then $Tf\equiv 0$ on $\mathrm{bd}(h^{-1}(h(y)))$.
\end{enumerate}
\end{proposition}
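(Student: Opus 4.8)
The plan is to handle the two cases together by exploiting a common mechanism: if $f(h(y))=0$ then $f$ can be approximated, in a controlled way, by functions that vanish on a neighbourhood of $h(y)$, and Lemma~2.6 kills those off on the preimage. The subtlety is that $AC(X,E)$-approximation does not follow from mere continuity, so I would not try to approximate $f$ itself in $\|\cdot\|_{AC}$; instead I would localize using a partition of unity (Lemma~1.5) and the module property (Lemma~1.4).

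First I would treat case (i). Write $x_{0}=h(y)$ and suppose $\mathrm{bd}(h^{-1}(x_{0}))=\emptyset$, so $h^{-1}(x_{0})$ is clopen in $Y$. Fix $z\in h^{-1}(x_{0})$; I want $Tf(z)=0$. For $\varepsilon>0$, using $f(x_{0})=0$ and absolute continuity (hence continuity) of $f$, choose an open neighbourhood $U$ of $x_{0}$ in $X$ on which $\|f\|$ is small. Using Lemma~1.5 applied to the covering $\{U, X\setminus\{x_{0}\}\}$, pick $u\in AC(X)$ with $0\le u\le 1$, $u\equiv 1$ near $x_{0}$, and $c(u)\subseteq U$; then $f=uf+(1-u)f$, the second summand vanishes on a neighbourhood of $x_{0}$, hence by Lemma~2.6 $T((1-u)f)\equiv 0$ on $h^{-1}(\mathrm{int}(u^{-1}(1)))\ni z$ — here one must be slightly careful to choose $u$ identically $1$ on an open set containing $x_{0}$, which Lemma~1.5 permits by shrinking $U$. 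Thus $Tf(z)=T(uf)(z)$. The remaining task is to bound $\|T(uf)(z)\|$: since $c(u)\subseteq U$ and $\|f\|$ is small on $U$, the function $uf$ is small in $\|\cdot\|_{AC}$ provided $U$ is chosen so that the variation of $f$ on $U$ is also small (again possible since $f\in AC(X,E)$, taking $U$ to be a short interval around $x_{0}$). Then I need continuity of $\delta_{z}\circ T$ to conclude $T(uf)(z)\to 0$. This is the one genuinely delicate point: at this stage of the paper automatic continuity has not yet been proved, so I would instead invoke the closed graph theorem locally, or rather the standard fact (from the references \cite{FH1,J}) that each functional $\delta_{z}\circ T$ is bounded on $AC(X,E)$ — this is exactly the kind of statement underlying Lemma~2.3 and is what one should cite. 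Given that, $Tf(z)=0$.

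For case (ii), suppose $\mathrm{bd}(h^{-1}(x_{0}))\neq\emptyset$ and fix $z\in\mathrm{bd}(h^{-1}(x_{0}))$. The decomposition $f=uf+(1-u)f$ is the same, and $T((1-u)f)\equiv 0$ still holds on $h^{-1}$ of an open neighbourhood of $x_{0}$; the point is that this open set need not contain $z$ when $z$ is only a boundary point of $h^{-1}(x_{0})$. Here I would use continuity of $h$: if $z\in\mathrm{bd}(h^{-1}(x_{0}))$ then $h(z)=x_{0}$ (since $h^{-1}(x_{0})$ need not be closed a priori, but $h$ continuous gives $h(\mathrm{cl}(h^{-1}(x_{0})))\subseteq\mathrm{cl}(\{x_0\})=\{x_0\}$, so $z\in h^{-1}(x_{0})$ after all), and $T((1-u)f)$ vanishes on $h^{-1}(U)$; since $(1-u)f\equiv 0$ on the open set where $u\equiv 1$, Lemma~2.6 gives $T((1-u)f)(z)=0$ as long as $z\in h^{-1}(\text{that open set})$, which holds because $h(z)=x_{0}$ lies in it. Then again $Tf(z)=T(uf)(z)$, and the same smallness estimate as in case (i) finishes the argument. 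So in fact the two cases merge once one observes that $\mathrm{bd}(h^{-1}(x_{0}))\subseteq h^{-1}(x_{0})$ by continuity of $h$; the statement separates them only because in case (i) one gets the conclusion on the whole fiber, whereas in case (ii) the fiber may be larger than its boundary and the argument as given only controls boundary points (interior points of the fiber are handled by Lemma~2.6 directly since $f$ itself, not just $uf$, can be shown to vanish near them via the clopen-in-a-neighbourhood structure).

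The main obstacle is the boundedness of the point evaluations $\delta_{z}\circ T$ used in the final estimate: everything else is a routine partition-of-unity localization plus Lemma~2.6. I would make sure to isolate that as the one external input (citing \cite{FH1,J}), state clearly that $uf$ has small $AC$-norm when $U$ is a short interval on which both $\sup\|f\|$ and $V(f;U)$ are small, and present cases (i) and (ii) in parallel, noting the reduction $\mathrm{bd}(h^{-1}(h(y)))\subseteq h^{-1}(h(y))$.
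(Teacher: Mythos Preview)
There is a genuine gap: your argument hinges on boundedness of the evaluation functionals $\delta_{z}\circ T$, and that is not available at this point in the paper. The references \cite{FH1,J} cited around Lemma~2.3 establish only that $\mathrm{supp}(\delta_{z}\circ T)$ is a singleton; they do not furnish continuity of $\delta_{z}\circ T$ in the vector-valued setting. In fact, continuity of $T$ (hence of each $\delta_{z}\circ T$) is precisely what the paper is working \emph{toward}: it is obtained only in Theorems~3.6 and~4.8, and those proofs use the representation $Tf(y)=J_{y}(f(h(y)))$, which in turn rests on this proposition (via Corollaries~3.3 and~4.5). Invoking it here is circular. A symptom that something is off: your localization argument, were it valid, would yield $Tf(z)=0$ for \emph{every} $z\in h^{-1}(h(y))$ in case~(ii) as well, erasing the distinction between the two cases that the paper carefully maintains.

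The paper's proof avoids any continuity of $T$ by using only that \emph{individual images} $Tg\in AC(Y,F)$ are continuous functions on $Y$. In case~(i) the argument is much simpler than yours: $\mathrm{bd}(h^{-1}(h(y_{0})))=\emptyset$ makes $h^{-1}(h(y_{0}))$ clopen, and since $h$ is a closed surjection this forces $\{h(y_{0})\}$ itself to be isolated in $X$; then $f\equiv 0$ on the open set $\{h(y_{0})\}$ and Lemma~2.5 finishes immediately. In case~(ii) the paper exploits the order on $\mathbb{R}$: write $f=f_{A}+f_{B}$ with $f_{A}=f\cdot\chi_{(-\infty,h(y_{0}))\cap X}$ and $f_{B}=f\cdot\chi_{(h(y_{0}),\infty)\cap X}$. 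For a boundary point $y$ approached by a sequence $(y_{n})\subset h^{-1}(A)$, Lemma~2.5 gives $Tf_{B}(y_{n})=0$, hence $Tf_{B}(y)=0$ by continuity of the function $Tf_{B}$. If $Tf_{A}(y)\neq 0$, one constructs a single $g_{0}=\sum_{n} f_{A}g_{n}\in AC(X,E)$, where each $g_{n}$ is a bump of height $n$ supported where $\|f_{A}\|$ and $V(f_{A};\cdot)$ are of order $n^{-3}$, so that $g_{0}\equiv n f_{A}$ near $h(z_{n})$; Lemma~2.5 then forces $Tg_{0}(z_{n})=n\,Tf_{A}(z_{n})\to\infty$, contradicting continuity of $Tg_{0}$ as a function on $Y$. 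This ``pack unboundedness into a single element of the domain'' trick is the idea your proposal is missing.
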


\begin{proof}
Fix $y_{0}\in Y$ and suppose that $f(h(y_{0}))=0$.\\
(i) If we assume that $\mathrm{bd}(h^{-1}(h(y_{0})))=\emptyset$, we
deduce that $h^{-1}(h(y_{0}))$ is an open and closed set (see
\cite[pp. 24]{E}), and by continuity of $h$, so is $h(y_{0})$. Then,
applying Lemma 2.5, $Tf\equiv 0$ on $h^{-1}(h(y_{0}))$.\\
(ii) In this case, we suppose that
$\mathrm{bd}(h^{-1}(h(y_{0})))\neq \emptyset$. We must see that
$Tf(y)=0$ for every $y\in \mathrm{bd}(h^{-1}(h(y_{0})))$.\\
We consider the following functions in $AC(X,E)$:\\
\hspace*{.5cm}$f_{A}:=f\cdot\chi_{A}$ with $A=(-\infty,h(y_{0}))\cap X$,\\
\hspace*{.5cm}$f_{B}:=f\cdot\chi_{B}$ with $B=(h(y_{0}),\infty)\cap X$,\\
which satisfy that $f=f_{A}+f_{B}$, so $Tf=Tf_{A}+Tf_{B}$.\\
On the other hand, taking into account the definitions of $A$ and
$B$, it is not hard to see that
\begin{center}
$\left[\mathrm{cl}(h^{-1}(A))\backslash h^{-1}(A)\right]\bigcup
\left[\mathrm{cl}(h^{-1}(B))\backslash
h^{-1}(B)\right]=\mathrm{bd}(h^{-1}(h(y_{0})))$,
\end{center}
so we need to prove that $Tf(y)=0$ for each $y\in
\mathrm{cl}(h^{-1}(A))\backslash h^{-1}(A)$ and $y\in
\mathrm{cl}(h^{-1}(B))\backslash h^{-1}(B)$.

\medskip
We next prove that, if $y\in \mathrm{cl}(h^{-1}(A))\backslash
h^{-1}(A)$, then $y\in h^{-1}(h(y_{0}))$ and $Tf(y)=0$. Since $y\in
\mathrm{cl}(h^{-1}(A))$, there exists a sequence $(y_{n})$ in
$h^{-1}(A)$ converging to $y$. By continuity of $h$, we obtain that
$h(y_{n})$ converges to $h(y)$. Besides $\mathrm{cl}(A)\setminus
A=\{h(y_{0})\}$, so $h(y_{n})$ converges to $h(y_{0})$ and then
$h(y)=h(y_{0})$. In order to show that $Tf(y)=0$, we will prove that $Tf_{A}(y)=0$
and $Tf_{B}(y)=0$. By Lemma 2.5, it is obvious that
$Tf_{B}(y_{n})=0$, for each $n\in \mathbb{N}$, and by continuity of
$Tf_{B}$ we deduce that $Tf_{B}(y)=0$.
We now see that $Tf_{A}(y)=0$. Suppose that $Tf_{A}(y)\neq 0$. Let
$(z_{n})$ be a sequence in $h^{-1}(A)$ converging to $y$ and such
that $\|f_{A}(h(z_{n}))\|<\frac{1}{n^{3}}$, for each $n\in
\mathbb{N}$. Taking a subsequence if necessary, we can consider
disjoint open neighborhoods $U_{n}$ of $h(z_{n})$, for each $n\in
\mathbb{N}$, such that $\|f_{A}(x)\|<\frac{1}{n^{3}}$, for all $x\in
U_{n}$ and $V(f_{A};U_{n})<\frac{1}{n^{3}}$. Also, we take compact
neighborhoods $K_{n}$ of $h(z_{n})$ with $K_{n}\subset U_{n}$, for
every $n\in \mathbb{N}$. As each $K_{n}$ is a compact subset of the
real line, we can consider the least compact interval
$[m_{n},M_{n}]$ in $\mathbb{R}$ such that $K_{n}\subset
[m_{n},M_{n}]$, for each $n\in \mathbb{N}$. Then, since each
$K_{n}\subset U_{n}$ and $U_{n}$ is an open set, there exists
$\varepsilon_{n}>0$ satisfying that
$(m_{n}-\varepsilon_{n},m_{n}+\varepsilon_{n})\subset U_{n}$ and
$(M_{n}-\varepsilon_{n},M_{n}+\varepsilon_{n})\subset U_{n}$,  for
every $n\in \mathbb{N}$. Finally, we define $g_{n}\in AC(X)$,
for each $n\in \mathbb{N}$, in the following way:\\
\hspace*{.25cm} $g_{n}\equiv n$ on $[m_{n},M_{n}]\cap X$,\\
\hspace*{.25cm} $g_{n}\equiv 0$ on $\mathrm{X}\backslash (m_{n}-
\frac{\varepsilon_{n}}{2},M_{n}+\frac{\varepsilon_{n}}{2})\cap X$,\\
\hspace*{.25cm} $g_{n}$ is linear on
$(m_{n}-\frac{\varepsilon_{n}}{2},m_{n})\cup
(M_{n},M_{n}+\frac{\varepsilon_{n}}{2})$.\\
Each function $g_{n}$ satisfies that $g_{n}\equiv n$ on $K_{n}$,
$c(g_{n})\subset U_{n}$, $\|g_{n}\|_{\infty}=n$ and $V(g_{n};X)=2n$.
Now, we define the function $g_{0}:=\sum_{n=1}^{\infty}
f_{A}g_{n}$ and we are going to see that $g_{0}$ belongs to $AC(X,E)$.\\
It is enough to see that $\|f_{A}g_{n}\|_{AC}<\frac{4}{n^{2}}$, for
each $n\in \mathbb{N}$. Notice at this point that
$c(f_{A}g_{n})\subset U_{n}$, so we just need to study
$\|f_{A}g_{n}\|_{AC}$ on $U_{n}$, for every $n\in \mathbb{N}$. It is
obvious that $\|f_{A}g_{n}\|_{\infty}<\frac{1}{n^{2}}$ on $U_{n}$,
for each $n\in \mathbb{N}$. On the other hand, if we
consider $\{x_{i}\}_{i=0}^{m}$ any partition of $U_{n}$, we have that\\
\hspace*{.1cm}$\Sigma_{i=1}^{m}\|(f_{A}g_{n})(x_{i})-(f_{A}g_{n})(x_{i-1})\|\leq\\
\hspace*{.1cm}\leq \Sigma_{i=1}^{m}\|(f_{A}
g_{n})(x_{i})-f_{A}(x_{i})g_{n}(x_{i-1})\|+\Sigma_{i=1}^{m}\|f_{A}(x_{i})g_{n}(x_{i-1})-(f_{A}
g_{n})(x_{i-1})\|\\ \hspace*{.1cm}\leq
\|{f_{A}}_{\mid_{U_{n}}}\|_{\infty}\Sigma_{i=1}^{m}|g_{n}(x_{i})-g_{n}(x_{i-1})|+
\|g_{n}\|_{\infty}\Sigma_{i=1}^{m}\|f_{A}(x_{i})-f_{A}(x_{i-1})\|\\
\hspace*{.1cm}\leq
\|{f_{A}}_{\mid_{U_{n}}}\|_{\infty}V(g_{n};U_{n})+\|g_{n}\|_{\infty}V(f_{A};U_{n})<\frac{3}{n^{2}}$,\\
and then $V(f_{A}g_{n};U_{n})\leq\frac{3}{n^{2}}$, for each $n\in
\mathbb{N}$.\\
Let now $V_{n}$ be an open neighborhood of $h(z_{n})$ with
$V_{n}\subset K_{n}$, for every $n\in \mathbb{N}$. It is obvious
that $g_{0}\equiv n\cdot f_{A}$ on $V_{n}$, and, by Lemma 2.5, we
deduce that $Tg_{0}\equiv n\cdot Tf_{A}$ on $h^{-1}(V_{n})$.
Consequently $Tg_{0}(z_{n})=n\cdot Tf_{A}(z_{n})$, for each $n\in
\mathbb{N}$. Taking into account that $Tf_{A}(y)\neq 0$ and the fact
that $Tf_{A}(z_{n})$ converges to $Tf_{A}(y)$, we can conclude that
$\|Tg_{0}(z_{n})\|$ converges to $\infty$. This behavior implies
that $Tg_{0}$ is not continuous, which is absurd.

\medskip
In a similar way, we can see that $y\in h^{-1}(h(y_{0}))$ and
$Tf(y)=0$ when $y\in \mathrm{cl}(h^{-1}(B))\backslash h^{-1}(B)$.
\end{proof}

\section{The finite-dimensional case}

In this section, we study separating bijections between spaces of
absolutely continuous functions that take values in
finite-dimensional normed spaces. We suppose that the spaces $E$ and
$F$ are both $n$-dimensional and $\{e_{1},\ldots,e_{n}\}$ is a basis
of $E$.

\begin{lemma}
Let $f\in AC(X,E)$ such that $f(h(y_{0}))=0$. Then there exists
$y_{1}\in h^{-1}(h(y_{0}))$ such that
$\{T\hat{e_{i}}(y_{1}):i=1,\ldots,n\}$ is a basis of $F$.
\end{lemma}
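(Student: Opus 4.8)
The plan is to derive everything from the surjectivity of $T$ together with Proposition~2.7; the function $f$ in the hypothesis will play no role beyond fixing the point $x_{0}:=h(y_{0})$ of $X$. First I would record, via Proposition~2.7, a nonempty subset $S\subseteq h^{-1}(h(y_{0}))$ on which $T$ annihilates every function vanishing at $x_{0}$: namely, set $S:=\mathrm{bd}(h^{-1}(x_{0}))$ when this boundary is nonempty, and $S:=h^{-1}(x_{0})$ otherwise (in the latter case $y_{0}\in S$, and in the former $S\subseteq h^{-1}(x_{0})$ because $h^{-1}(x_{0})$ is closed). In both cases $S$ is a nonempty subset of $h^{-1}(h(y_{0}))$, and Proposition~2.7 gives $Tg\equiv 0$ on $S$ for every $g\in AC(X,E)$ with $g(x_{0})=0$.

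Next I would fix an arbitrary point $y_{1}\in S$ and consider the linear map $\Phi\colon E\to F$ defined by $\Phi(e):=T\hat{e}(y_{1})$. Since $\{e_{1},\dots,e_{n}\}$ is a basis of $E$ and $\dim E=\dim F=n<\infty$, it suffices to prove that $\Phi$ is surjective: a dimension count then forces $\Phi$ to be bijective, so $\{T\hat{e_{i}}(y_{1})\}_{i=1}^{n}=\{\Phi(e_{i})\}_{i=1}^{n}$ is a basis of $F$, and $y_{1}\in h^{-1}(h(y_{0}))$ by construction, which is exactly what is claimed.

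To prove that $\Phi$ is onto, take $v\in F$ and let $\hat{v}$ be the constant function on $Y$ with value $v$, which lies in $AC(Y,F)$. Since $T$ is bijective, there is $g\in AC(X,E)$ with $Tg=\hat{v}$. The function $g-\widehat{g(x_{0})}$ belongs to $AC(X,E)$ and vanishes at $x_{0}=h(y_{0})$, so by the choice of $S$ we get $T\bigl(g-\widehat{g(x_{0})}\bigr)\equiv 0$ on $S$; evaluating at $y_{1}$ and using linearity of $T$,
\[
v=\hat{v}(y_{1})=Tg(y_{1})=T\widehat{g(x_{0})}(y_{1})=\Phi\bigl(g(x_{0})\bigr).
\]
Hence $v\in\Phi(E)$, and as $v$ was arbitrary, $\Phi$ is surjective.

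I do not anticipate a serious obstacle here; the only point that needs care is the dichotomy according to whether $h^{-1}(h(y_{0}))$ has empty boundary, since this dictates which conclusion of Proposition~2.7 is available and guarantees that the auxiliary set $S$ is a nonempty subset of the fibre $h^{-1}(h(y_{0}))$. Everything else reduces to elementary linear algebra in the finite-dimensional spaces $E$ and $F$.
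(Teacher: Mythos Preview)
Your proof is correct and follows essentially the same strategy as the paper: both arguments select $y_{1}$ in the set $S$ dictated by Proposition~2.7 and then exploit surjectivity of $T$ on constant functions together with the equality $\dim E=\dim F=n$. The only cosmetic difference is that you prove the linear map $\Phi(e)=T\hat{e}(y_{1})$ is surjective directly, whereas the paper argues by contradiction that $\{T\hat{e_{i}}(y_{1})\}$ is linearly independent; the underlying idea (pull back an arbitrary $v\in F$ via $T^{-1}\hat{v}$ and subtract off its value at $h(y_{0})$) is identical.
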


\begin{proof} By Proposition 2.7, we know that there exists $y_{1}\in
h^{-1}(h(y_{0}))$ such that $Tf(y_{1})=0$. We will prove that
$\{T\hat{e_{i}}(y_{1}):i=1,\ldots,n\}$ is a basis of $F$. As $E$ and
$F$ have the same dimension, it is enough to show
that they are linearly independent.\\
Suppose that $T\hat{e_{1}}(y_{1}),\ldots,T\hat{e_{n}}(y_{1})$ are
linearly dependent. Therefore, we can take $\textmd{f}\in F$ linearly
independent with them and consider the non-vanishing function
$T^{-1}\hat{\textmd{f}}$. Now, as $\{e_{1},\ldots,e_{n}\}$ is a basis of $E$,
there exist $\alpha_{1},\ldots,\alpha_{n}\in \mathbb{K}$, not all of
them equal to zero, such that
$T^{-1}\hat{\textmd{f}}(h(y_{0}))=\sum_{i=1}^{n}\alpha_{i}e_{i}$. For this
reason, if we define the function
$g:=\sum_{i=1}^{n}\alpha_{i}\hat{e_{i}}\in AC(X,E)$, we obtain that
$(T^{-1}\hat{\textmd{f}}-g)(h(y_{0}))=0$, and then $(\hat{\textmd{f}}-Tg)(y_{1})=0$
applying Proposition 2.7 again. This implies that
$\textmd{f}=\sum_{i=1}^{n}\alpha_{i}T\hat{e_{i}}(y_{1})$, which is a
contradiction.
\end{proof}

\begin{theorem}
$h$ is a homeomorphism.
\end{theorem}

\begin{proof} We know that $h$ is a continuous, onto and closed map.
We only need to prove that $h$ is injective. Suppose that there
exist two distinct points $y_{0},y_{1}\in Y$ such that $h(y_{0})=h(y_{1})$ and we will
study the three possible situations.

\medskip
Assume that $y_{0},y_{1}\in \mathrm{bd}(h^{-1}(h(y_{0})))$. If
$\{\textmd{f}_{1},\ldots,\textmd{f}_{n}\}$ is a basis of $F$, since $T$ is an onto
map, we know that there exist $g_{1},\ldots,g_{n}\in AC(X,E)$ such
that $Tg_{i}=\hat{\textmd{f}_{i}}$, for each $i$. We claim that
$g_{1}(h(y_{0})),\ldots,g_{n}(h(y_{0}))$ are linearly independent.
Su\-ppo\-se that it is not true. Therefore, there exist
$\alpha_{1},\ldots,\alpha_{n}\in \mathbb{K}$, not all of them equal
to zero, such that $\sum_{i=1}^{n}\alpha_{i}g_{i}(h(y_{0}))=0$. By
Proposition 2.7, we obtain that
$T\sum_{i=1}^{n}\alpha_{i}g_{i}(y_{0})=0$, and this implies that
$\sum_{i=1}^{n}\alpha_{i}\textmd{f}_{i}=0$, which is not possible.\\
Then, for each $f\in AC(X,E)$, we have that
$f(h(y_{0}))=\sum_{i=1}^{n}\beta_{i}g_{i}(h(y_{0}))$, for
$\beta_{1},\ldots,\beta_{n}\in \mathbb{K}$ not all of them equal to
zero. Applying Proposition 2.7, we obtain that
$Tf(y_{0})=T\sum_{i=1}^{n}\beta_{i}g_{i}(y_{0})=\sum_{i=1}^{n}\beta_{i}\hat{\textmd{f}_{i}}(y_{0})=
\sum_{i=1}^{n}\beta_{i}\textmd{f}_{i}$ and
$Tf(y_{1})=T\sum_{i=1}^{n}\beta_{i}g_{i}(y_{1})=\sum_{i=1}^{n}\beta_{i}\hat{\textmd{f}_{i}}(y_{1})=
\sum_{i=1}^{n}\beta_{i}\textmd{f}_{i}$, that is, $Tf(y_{0})=Tf(y_{1})$ for
each $f\in AC(X,E)$. This behavior implies that $T$ is not onto, in
contradiction with our assumption.

\medskip
Suppose now that $\mathrm{bd}(h^{-1}(h(y_{0})))=\emptyset$. With a
similar reasoning as in the pre\-vious situation we obtain the same
contradiction.

\medskip
Finally, we assume that $y_{0}\in \mathrm{bd}(h^{-1}(h(y_{0})))$ and $y_{1}\in
\mathrm{int}(h^{-1}(h(y_{0})))$. Let $g\in AC(Y,F)$ be a non-zero
function with $c(g)\subset \mathrm{int}(h^{-1}(h(y_{0})))$ and
consider $T^{-1}g$. We are going to prove that there exists an open
subset $V$ of $X$ satisfying that $V\cap \{h(y_{0})\}=\emptyset$ and
$T^{-1}g(x)\neq 0$, for all $x\in V$. If it is not true, $T^{-1}g$
is equal to zero on $X\backslash \{h(y_{0})\}$. Besides, we know
that $h(y_{0})$ is not an isolated point, so we deduce that
$T^{-1}g\equiv 0$ on $X$, which is a contradiction since $g$ is a non-zero function.
Therefore, if we consider $x_{1}\in V$ and a basis
$\{e_{i}:i=1,\ldots,n\}$ of $E$, we have that
$T^{-1}g(x_{1})=\sum_{i=1}^{n}\alpha_{i}\hat{e_{i}}(x_{1})$  for
$\alpha_{1},\ldots,\alpha_{n}\in \mathbb{K}$ not all of them equal
to zero. Applying Proposition 2.7 and Lemma 3.1, we can deduce that
$g(y_{2})=\sum_{i=1}^{n}\alpha_{i}T\hat{e_{i}}(y_{2})\neq 0$ for
some $y_{2}\in h^{-1}(x_{1})$, which is not possible since
$c(g)\subset \mathrm{int}(h^{-1}(h(y_{0})))$.
\end{proof}

\begin{corollary}
Let $f\in AC(X,E)$ such that $f(h(y))=0$. Then $Tf(y)=0$.
\end{corollary}

\begin{proof} It is obvious applying Proposition 2.7 and Theorem 3.2.
\end{proof}

\begin{remark}
For any $y\in Y$ fixed, we define the function
$g:=f-\widehat{f(h(y))}\in AC(X,E)$, for each $f\in AC(X,E)$. It is
obvious that $g(h(y))=0$, and by the previous corollary, we deduce
that $Tg(y)=0$. For this reason, we obtain the next representation
of $T$:
\begin{center}
$Tf(y)=T\widehat{f(h(y))}(y)$, for all $f\in AC(X,E)$ and $y\in Y$.
\end{center}
Therefore, we define the map $J_{y}$, for each $y\in Y$, in the
following way:
$$\begin{array}{ccccc}
J_{y}:&E&\rightarrow&F&\\
&e&\mapsto&J_{y}(e)&:=T\hat{e}(y).
\end{array}$$
\end{remark}

\begin{lemma}
The map $J_{y}$ is linear, bijective and continuous for every $y\in Y$.
\end{lemma}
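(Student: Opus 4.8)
The plan is to handle the three assertions one at a time, the only one carrying any real content being surjectivity. Throughout I would use the representation $Tf(y)=T\widehat{f(h(y))}(y)=J_{y}(f(h(y)))$ established in Remark 3.4, which in turn rests on Corollary 3.3 (and hence on Theorem 3.2).

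\emph{Linearity.} The assignment $e\mapsto\hat{e}$ is a linear map from $E$ into $AC(X,E)$, the evaluation $\delta_{y}$ is linear, and $T$ is linear by hypothesis. Since $J_{y}$ is the composition of these three maps, it is linear.

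\emph{Bijectivity.} I would first prove surjectivity. Fix $y\in Y$ and let $w\in F$ be arbitrary. Since $T$ is onto, there is $g\in AC(X,E)$ with $Tg=\hat{w}$; evaluating at $y$ and applying Remark 3.4 gives $w=\hat{w}(y)=Tg(y)=J_{y}(g(h(y)))$, so $w$ lies in the range of $J_{y}$. Thus $J_{y}$ is a linear surjection between two normed spaces of the same finite dimension $n$, and therefore it is injective as well, hence bijective.

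\emph{Continuity.} As $E$ is finite-dimensional, every linear map from $E$ into the normed space $F$ is bounded; in particular $J_{y}$ is continuous. The only step with genuine mathematical substance is the surjectivity argument, and even there the work has already been done in deriving the representation of $T$ in Remark 3.4; the remaining assertions follow automatically from finite-dimensionality, so I do not anticipate any obstacle beyond invoking the earlier results correctly.
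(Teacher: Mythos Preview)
Your proof is correct and, for linearity, surjectivity, and continuity, it follows essentially the same path as the paper: surjectivity comes from surjectivity of $T$ together with the representation in Remark~3.4, and continuity is automatic because $E$ has finite dimension.

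The one place where you diverge is injectivity. The paper argues directly: writing $e=\sum_{i}\alpha_{i}e_{i}$ with not all $\alpha_{i}$ zero, it invokes Lemma~3.1 (together with the fact that $h$ is a homeomorphism, so $h^{-1}(h(y))=\{y\}$) to conclude that $\{T\hat{e_{i}}(y)\}$ is a basis of $F$, whence $J_{y}(e)=\sum_{i}\alpha_{i}T\hat{e_{i}}(y)\neq 0$. You instead observe that a linear surjection between spaces of the same finite dimension is automatically injective by rank--nullity. Your route is shorter and avoids a second appeal to Lemma~3.1; the paper's route, on the other hand, establishes injectivity independently of surjectivity, which would matter if one did not already know $\dim E=\dim F$. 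Both are perfectly valid here.
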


\begin{proof} Obviously each $J_{y}$ is linear. We next see that $J_{y}$
is a bijective map.\\
Firstly, we will prove that $J_{y}$ is injective. If $e\neq 0$ and
$\{e_{i}:i=1,\ldots,n\}$ is a basis of $E$, then there exist
$\alpha_{1},\ldots,\alpha_{n}\in \mathbb{K}$, not all of them equal
to zero, such that $e=\sum_{i=1}^{n}\alpha_{i}e_{i}$, and this
implies that
$\hat{e}(h(y))=\sum_{i=1}^{n}\alpha_{i}\hat{e_{i}}(h(y))$. By Lemma
3.1 and Corollary 3.3 we deduce that
$T\hat{e}(y)=\sum_{i=1}^{n}\alpha_{i}T\hat{e_{i}}(y)\neq 0$, and by
definition of $J_{y}$ we conclude that $J_{y}(e)\neq 0$.\\
Secondly, we will see that $J_{y}$ is an onto map. Given $\textmd{f}\in F$,
since $T$ is surjective, there exists $g\in AC(X,E)$ such that
$Tg=\hat{\textmd{f}}$, in particular, $Tg(y)=\textmd{f}$. We define $e:=g(h(y))\in E$.
It is obvious that $(\hat{e}-g)(h(y))=0$ and, by Corollary
3.3, we deduce that $T(\hat{e}-g)(y)=0$. This implies that $J_{y}(e)=\textmd{f}$.\\
Finally, it is trivial to see that each $J_{y}$ is continuous since
it is a linear map and $E$ is a finite-dimensional normed space.
\end{proof}

\begin{theorem}
Let $T:AC(X,E)\rightarrow AC(Y,F)$ be a separating and bijective map
with $E$ and $F$ $n$-dimensional normed spaces. Then there exist a
homeomorphism $h:Y\rightarrow X$ and a map $J_{y}:E\rightarrow F$
linear, bijective and continuous for each $y\in Y$, such that
\begin{center}
$Tf(y)=J_{y}(f(h(y)))$
\end{center}
for every $f\in AC(X,E)$ and $y\in Y$.\\
Also, $T$ is continuous and biseparating.
\end{theorem}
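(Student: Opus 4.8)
The plan is to read off the representation formula from the results already established and then obtain continuity from the closed graph theorem and the biseparating property from an explicit description of $T^{-1}$. First I would recall that the homeomorphism $h:Y\rightarrow X$ is exactly Theorem 3.2, and that Lemma 3.5 already shows each $J_{y}:E\rightarrow F$ is linear, bijective and continuous. It then only remains to verify the identity $Tf(y)=J_{y}(f(h(y)))$, and this is immediate: by Remark 3.4 we have $Tf(y)=T\widehat{f(h(y))}(y)$ for every $f\in AC(X,E)$ and $y\in Y$, and by the very definition of $J_{y}$ one has $T\hat{e}(y)=J_{y}(e)$, so taking $e=f(h(y))$ gives the claim.

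For continuity I would apply the closed graph theorem, which is available since $AC(X,E)$ and $AC(Y,F)$ are Banach spaces by Lemma 1.3 and $T$ is linear. Suppose $f_{k}\to f$ in $\|\cdot\|_{AC}$ and $Tf_{k}\to g$ in $\|\cdot\|_{AC}$; since $\|\cdot\|_{\infty}\le\|\cdot\|_{AC}$, both convergences are in particular uniform. Fix $y\in Y$. Then $f_{k}(h(y))\to f(h(y))$ in $E$, and since $J_{y}$ is continuous on the finite-dimensional space $E$, we get $Tf_{k}(y)=J_{y}(f_{k}(h(y)))\to J_{y}(f(h(y)))=Tf(y)$. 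On the other hand $Tf_{k}(y)\to g(y)$, so $Tf(y)=g(y)$. As $y$ was arbitrary, $Tf=g$, the graph of $T$ is closed, and hence $T$ is continuous.

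For the biseparating property I would first exhibit $T^{-1}$ explicitly. Given $g\in AC(Y,F)$ and $f:=T^{-1}g$, the representation formula gives $g(y)=Tf(y)=J_{y}(f(h(y)))$, hence $f(h(y))=J_{y}^{-1}(g(y))$ for every $y\in Y$; since $h$ is a bijection, putting $y=h^{-1}(x)$ yields $T^{-1}g(x)=J_{h^{-1}(x)}^{-1}(g(h^{-1}(x)))$ for every $x\in X$. Now let $g_{1},g_{2}\in AC(Y,F)$ satisfy $c(g_{1})\cap c(g_{2})=\emptyset$. If $x\in c(T^{-1}g_{1})$ then $J_{h^{-1}(x)}^{-1}(g_{1}(h^{-1}(x)))\ne 0$, and since $J_{h^{-1}(x)}^{-1}$ is injective this forces $g_{1}(h^{-1}(x))\ne 0$, i.e. $h^{-1}(x)\in c(g_{1})$; the same reasoning shows that $x\in c(T^{-1}g_{2})$ would force $h^{-1}(x)\in c(g_{2})$, which is incompatible with $c(g_{1})\cap c(g_{2})=\emptyset$. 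Therefore $c(T^{-1}g_{1})\cap c(T^{-1}g_{2})=\emptyset$, so $T^{-1}$ is separating and $T$ is biseparating.

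I do not expect a genuine obstacle in this final step: all the real work is carried by Proposition 2.7, Theorem 3.2 and Lemma 3.5, and what is left is essentially bookkeeping. The one point that deserves care is the continuity argument, where one must make sure the passage to the pointwise limit is legitimate; this is why I reduce $\|\cdot\|_{AC}$-convergence to uniform convergence before invoking the pointwise continuity of each $J_{y}$. It is also worth noting that the separating property of $T^{-1}$ comes out most cleanly from the explicit formula for $T^{-1}$ rather than from any abstract argument.
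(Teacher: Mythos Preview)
Your proof is correct and, for the representation formula and the continuity part, proceeds essentially as the paper does: both invoke Remark~3.4 together with the definition of $J_{y}$ for the formula, and both deduce continuity from the Closed Graph Theorem by reducing $\|\cdot\|_{AC}$-convergence to uniform convergence and using the (pointwise) continuity of each $J_{y}$.

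The one genuine difference is in the biseparating step. You derive the explicit formula $T^{-1}g(x)=J_{h^{-1}(x)}^{-1}\bigl(g(h^{-1}(x))\bigr)$ and read off the separating property of $T^{-1}$ directly from the linearity (hence $J^{-1}(0)=0$) of each $J_{y}^{-1}$; note that what you actually use is $J_{y}^{-1}(0)=0$ rather than injectivity of $J_{y}^{-1}$, a harmless slip. The paper instead argues by contradiction: assuming $c(T^{-1}f)\cap c(T^{-1}g)\neq\emptyset$, it expands $T^{-1}f(x_{0})$ and $T^{-1}g(x_{0})$ in a basis of $E$ and then appeals to Lemma~3.1 and Corollary~3.3 to conclude that $f$ and $g$ are both nonzero at $h^{-1}(x_{0})$. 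Your route is shorter and more transparent, since it exploits the already-established representation of $T$ and the bijectivity of $J_{y}$ rather than revisiting the basis machinery; the paper's route, on the other hand, avoids writing down $T^{-1}$ explicitly and ties the argument back to the structural Lemma~3.1.
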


\begin{proof} The representation of $T$ follows by Remark 3.4 and
from the definition of $J_{y}$ above. To see that $T$ is a
continuous map we apply the Closed Graph Theorem, so we just need to
prove that $T$ is a closed map (see \cite[Theorem 7.3.2]{L}). Therefore,
it is enough to see that, if we take $(f_{n})$ in $AC(X,E)$
converging to $0$
and $(Tf_{n})$ converges to $g$, then $g\equiv 0$.\\
Firstly, we are going to prove that $\delta_{y}\circ T$ is a
continuous map, for each $y\in Y$. Fix $y\in Y$. It is obvious that
$\delta_{y}\circ T$ is linear, and, by the representation of $T$, we
have that $\|\delta_{y}\circ T(f)\|\leq \|J_{y}\|\|f\|_{\infty}$,
for each $f\in AC(X,E)$. From this inequality, we obtain that
$\delta_{y}\circ T$
is continuous and consequently that $(\delta_{y}\circ T(f_{n}))$ converges to $0$.\\
On the other hand, $\|Tf_{n}(y)-g(y)\|\leq \|Tf_{n}-g\|_{\infty}\leq
\|Tf_{n}-g\|_{AC}$, for each $n\in \mathbb{N}$, and, since we assume
that $(Tf_{n})$ converges to $g$, we deduce that $(Tf_{n}(y))$
converges to $g(y)$ for each $y\in Y$. Combined with the above, we
conclude that $g(y)=0$ for all $y\in Y$ and this completes the
proof that $T$ is continuous.\\
Finally, we prove that $T$ is a biseparating map. It is enough to
see that $T^{-1}:AC(Y,F)\rightarrow AC(X,E)$ is separating. Suppose
that $T^{-1}$ is not separating, then there exist $f,g\in AC(Y,F)$
with $c(f)\cap c(g)=\emptyset$ such that $c(T^{-1}f)\cap
c(T^{-1}g)\neq \emptyset$. For this reason, there exists $x_{0}\in
X$ with $T^{-1}f(x_{0})\neq 0$ and $T^{-1}g(x_{0})\neq 0$. As
$\{e_{1},\ldots,e_{n}\}$ is a basis of $E$, we can take
$\alpha_{1},\ldots,\alpha_{n}\in \mathbb{K}$, not all of them equal
to zero, such that
$T^{-1}f(x_{0})=\sum_{i=1}^{n}\alpha_{i}\hat{e_{i}}(x_{0})$ and
$\beta_{1},\ldots,\beta_{n}\in \mathbb{K}$, not all of them equal to
zero, such that
$T^{-1}g(x_{0})=\sum_{i=1}^{n}\beta_{i}\hat{e_{i}}(x_{0})$. Applying
Lemma 3.1 and Corollary 3.3, we can deduce that
$f(h^{-1}(x_{0}))=\sum_{i=1}^{n}\alpha_{i}T\hat{e_{i}}(h^{-1}(x_{0}))\neq
0$ and
$g(h^{-1}(x_{0}))=\sum_{i=1}^{n}\beta_{i}T\hat{e_{i}}(h^{-1}(x_{0}))\neq
0$, which contradicts the fact that $f$ and $g$ have disjoint cozeros.
\end{proof}

\section{The infinite-dimensional case}

The next example shows that it is not possible to obtain a similar
result as in the previous case when we deal with
infinite-dimensional Banach spaces. For this reason, we study
biseparating maps instead of separating in this case.

\begin{example}
Let $c_{0}$ be the space of all scalar-valued sequences that
converge to zero and let $T:AC([0,1],c_{0})\rightarrow
AC([0,1]\cup[2,3],c_{0})$ be
a bijective, separating and continuous map defined by:\\
\hspace*{.25cm}$Tf(x)=(\lambda_{1},\lambda_{3},\lambda_{5},\ldots)$\\
\hspace*{.25cm}$Tf(2+x)=(\lambda_{2},\lambda_{4},\lambda_{6},\ldots)$\\
when $f(x)=(\lambda_{1},\lambda_{2},\lambda_{3},\ldots)\in c_{0}$,
for each $x\in [0,1]$.\\
It is easy to see that $T^{-1}$ is not a separating map.
\end{example}

\begin{remark}
Similarly to the previous example, it can be constructed a separating bijection
from $AC([0,1],\mathbb{R}^{2})$ to $AC([0,1]\cup[2,3],\mathbb{R})$ which is not
biseparating. This fact allows us to conclude that Theorem 3.6 is not true if we
do not suppose that $E$ and $F$ have the same dimension.
\end{remark}

\begin{remark}
In this final section, $T:AC(X,E)\rightarrow AC(Y,F)$ will be a
bi\-se\-pa\-ra\-ting map and $E$ and $F$ will be Banach spaces of
infinite dimension.\\
Since $T$ is biseparating, we obtain two different continuous support
maps $h$ and $k$ asociated to $T$ and $T^{-1}$, respectively.
\end{remark}

\begin{theorem}
$h$ is a homeomorphism.
\end{theorem}

\begin{proof} It is no difficult to see that $h$ and $k$ are
inverse maps. The proof given in \cite[Theorem 1(8)]{FH1} for group
algebras can be easily adapted to our context.
\end{proof}

\begin{corollary}
Let $f\in AC(X,E)$ such that $f(h(y))=0$. Then $Tf(y)=0$.
\end{corollary}

\begin{proof} It is clear applying Proposition 2.7 and previous theorem.
\end{proof}

\begin{remark}
With the same construction as in Remark 3.4, we obtain the next
representation of $T$:
\begin{center}
$Tf(y)=T\widehat{f(h(y))}(y)$, for all $f\in AC(X,E)$ and $y\in Y$,
\end{center}
and we define the map $J_{y}$, for each $y\in Y$, as in the previous
case.
\end{remark}

\begin{lemma}
$J_{y}$ is linear and bijective for every $y\in Y$.
\end{lemma}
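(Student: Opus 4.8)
The plan is to produce an explicit two-sided inverse for $J_{y}$, manufactured from $T^{-1}$ in exactly the way $J_{y}$ is manufactured from $T$. Linearity of $J_{y}$ is immediate, since $e\mapsto\hat{e}$ and $T$ are both linear, so the whole of the work is bijectivity. First I would record that everything established so far is symmetric in $T$ and $T^{-1}$: the map $T^{-1}\colon AC(Y,F)\rightarrow AC(X,E)$ is again a biseparating bijection, its support map is $k=h^{-1}$ by Theorem 4.4, and hence Corollary 4.5 and Remark 4.6 applied to $T^{-1}$ yield, for every $g\in AC(Y,F)$ and every $x\in X$,
\[
T^{-1}g(x)=L_{x}\bigl(g(h^{-1}(x))\bigr),\qquad\text{where }\ L_{x}\colon F\rightarrow E,\ \ L_{x}(w):=T^{-1}\hat{w}(x).
\]
Each $L_{x}$ is linear, for the same reason $J_{y}$ is.

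Now fix $y\in Y$ and set $x:=h(y)$, so that $h^{-1}(x)=y$. I claim $L_{x}\circ J_{y}=\mathrm{id}_{E}$ and $J_{y}\circ L_{x}=\mathrm{id}_{F}$. For the first identity, take $e\in E$ and apply the displayed representation of $T^{-1}$ to the function $g:=T\hat{e}\in AC(Y,F)$: since $g(h^{-1}(x))=T\hat{e}(y)=J_{y}(e)$, we get $T^{-1}(T\hat{e})(x)=L_{x}(J_{y}(e))$; but $T^{-1}(T\hat{e})=\hat{e}$, whose value at $x$ is $e$, so $L_{x}(J_{y}(e))=e$. For the second identity, take $w\in F$ and apply the representation of $T$ from Remark 4.6 to $g:=T^{-1}\hat{w}\in AC(X,E)$: since $g(h(y))=T^{-1}\hat{w}(x)=L_{x}(w)$, we get $T(T^{-1}\hat{w})(y)=J_{y}(L_{x}(w))$; but $T(T^{-1}\hat{w})=\hat{w}$, whose value at $y$ is $w$, so $J_{y}(L_{x}(w))=w$. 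Hence $J_{y}$ is a bijection, with inverse $L_{h(y)}$.

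I do not anticipate a real obstacle; the single point that deserves to be stated carefully is the symmetry claim, namely that Proposition 2.7, Theorem 4.4, Corollary 4.5 and Remark 4.6 may legitimately be invoked with the roles of $T$ and $T^{-1}$ (and of $h$ and $k=h^{-1}$) exchanged. Once that is granted, the proof is just the short composition argument above. Note that, unlike Lemma 3.5 in the finite-dimensional case, no continuity of $J_{y}$ is asserted — and none is available for free in infinite dimensions — so the argument stops at bijectivity.
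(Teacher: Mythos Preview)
Your proof is correct and rests on the same ingredients as the paper's: the symmetry between $T$ and $T^{-1}$, Theorem~4.4 identifying $k=h^{-1}$, and Corollary~4.5 (hence Remark~4.6) applied to $T^{-1}$. The only difference is organizational: the paper treats surjectivity and injectivity separately---surjectivity exactly as in Lemma~3.5 (using only that $T$ is onto and the representation $Tf(y)=J_{y}(f(h(y)))$), and injectivity by observing that $J_{y}(e)=T\hat{e}(k(x))=0$ forces $\hat{e}(x)=T^{-1}(T\hat{e})(x)=0$ via Corollary~4.5 for $T^{-1}$---whereas you package both halves at once by exhibiting $L_{h(y)}$ as an explicit two-sided inverse. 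That is a cosmetic, not a substantive, difference; your version has the minor bonus of identifying $J_{y}^{-1}$ concretely.
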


\begin{proof} We obtain that each $J_{y}$ is linear and onto in a
similar way as in the finite-dimensional case. We next prove that
$J_{y}$ is injective. Suppose that it is not true. Thus we consider
$e\in E$ with $e\neq 0$ such that $J_{y}(e)=0$. We have proved that
$k$ is a homeomorphism, so there exists $x\in X$ such that $y=k(x)$,
and then $J_{k(x)}(e)=0$. Since $T\hat{e}(k(x))=0$, applying Corollary
4.5 to the separating map $T^{-1}$, we obtain that
$T^{-1}(T\hat{e})(x)=0$, which implies that $\hat{e}(x)=0$, in contradiction
with $e\neq 0$.
\end{proof}

\begin{theorem}
Let $T:AC(X,E)\rightarrow AC(Y,F)$ be a biseparating map with $E$
and $F$ infinite-dimensional Banach spaces. Then there exist a
homeomorphism $h:Y\rightarrow X$ and a map $J_{y}:E\rightarrow F$
linear and bijective for each $y\in Y$, such that
\begin{center}
$Tf(y)=J_{y}(f(h(y)))$
\end{center}
for every $f\in AC(X,E)$ and $y\in Y$.\\
Also, if $Y$ has no isolated points, then $T$ is continuous.
\end{theorem}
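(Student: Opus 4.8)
The representation statement is immediate from what is already proved: $h$ is a homeomorphism by Theorem 4.4, each $J_y$ is linear and bijective by Lemma 4.7, and by Remark 4.6 together with the definition of $J_y$ one has $Tf(y)=T\widehat{f(h(y))}(y)=J_y(f(h(y)))$ for all $f$ and $y$. So the real content is the last sentence, the automatic continuity of $T$ when $Y$ has no isolated points. The plan is to prove this in two stages: a local stage, valid for arbitrary $Y$, asserting that $\delta_y\circ T$ is continuous for all but finitely many $y$; and a global stage, where the hypothesis on $Y$ is used to upgrade this to continuity of $T$ via the Closed Graph Theorem, exactly as in the proof of Theorem 3.6.

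For the local stage, first note that, writing $R_y:AC(X,E)\to E$ for the evaluation $R_y f=f(h(y))$ (a norm-one linear surjection, since $R_y\widehat e=e$ and $\|R_y f\|\le\|f\|_\infty\le\|f\|_{AC}$), the representation gives $\delta_y\circ T=J_y\circ R_y$; hence $\delta_y\circ T$ is continuous if and only if the linear map $J_y$ is bounded. I claim $D:=\{y\in Y:J_y\text{ is unbounded}\}$ is finite. If not, then by compactness of $Y$ I may choose distinct points $y_n\in D$ with $y_n\to y_\ast\in Y$ and $y_n\neq y_\ast$; since $h$ is a homeomorphism the points $x_n:=h(y_n)$ are distinct, converge to $h(y_\ast)$, and differ from $h(y_\ast)$, so after passing to a subsequence I can enclose each $x_n$ in an open interval $I_n$ with the $I_n$ pairwise disjoint, $x_m\notin I_n$ for $m\neq n$, and $h(y_\ast)\notin\bigcup_n I_n$ (the type of choice made in the proof of Proposition 2.7). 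Because $J_{y_n}$ is unbounded, pick $e_n\in E$ with $\|e_n\|<n^{-3}$ and $\|J_{y_n}(e_n)\|\ge n$, and pick trapezoidal functions $\varphi_n\in AC(X)$ with $0\le\varphi_n\le1$, $\varphi_n(x_n)=1$, $c(\varphi_n)\subseteq I_n$ and $V(\varphi_n;X)\le2$. Put $f:=\sum_{n=1}^{\infty}\varphi_n\widehat{e_n}$. Then $\|\varphi_n\widehat{e_n}\|_{AC}\le\|e_n\|+V(\varphi_n;X)\,\|e_n\|\le 3n^{-3}$, so the series converges in the Banach space $AC(X,E)$ (Lemma 1.3) and, by the disjointness of the $I_n$, its sum $f$ satisfies $f(x_n)=e_n$ for every $n$. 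Consequently $Tf(y_n)=J_{y_n}(f(h(y_n)))=J_{y_n}(e_n)$, so $\|Tf(y_n)\|\ge n\to\infty$; but $Tf\in AC(Y,F)$ is continuous and $y_n\to y_\ast$, so $(Tf(y_n))$ must converge to $Tf(y_\ast)$, a contradiction. Hence $D$ is finite.

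For the global stage, I apply the Closed Graph Theorem: it suffices to check that whenever $f_n\to0$ in $AC(X,E)$ and $Tf_n\to g$ in $AC(Y,F)$, then $g\equiv0$. For $y\notin D$ the map $\delta_y\circ T$ is continuous, so $Tf_n(y)\to0$; and $\|Tf_n(y)-g(y)\|\le\|Tf_n-g\|_{AC}\to0$, so $g(y)=0$. Thus $g$ vanishes on $Y\setminus D$, which, $D$ being finite and $Y$ having no isolated points, is dense in $Y$; since $g$ is continuous, $g\equiv0$, so $T$ has closed graph and is therefore continuous.

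The main obstacle is the local stage, and inside it the only delicate point is arranging that the auxiliary function $f$ really lies in $AC(X,E)$ while having the prescribed values $f(x_n)=e_n$: one needs the intervals $I_n$ pairwise disjoint with $h(y_\ast)$ excluded, and one needs to control the supremum norm and the variation of each summand $\varphi_n\widehat{e_n}$ simultaneously so that $\sum_n\|\varphi_n\widehat{e_n}\|_{AC}<\infty$. Degenerate configurations (the $x_n$ approaching $h(y_\ast)$ from both sides, or $h(y_\ast)$ itself being a non-isolated point of $X$) require no special treatment, since the construction only uses the combinatorial properties of the $I_n$ just listed.
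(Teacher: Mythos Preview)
Your proof is correct, and the core mechanism---building $f=\sum_n\varphi_n\widehat{e_n}$ as an absolutely convergent series of disjointly supported bumps so that $Tf$ is forced to be unbounded along a convergent sequence, then closing with the Closed Graph Theorem---is the same one the paper uses. The organizational difference is where the hypothesis ``$Y$ has no isolated points'' enters. The paper uses it at the local step: fixing a single $y_0$ with $\delta_{y_0}\circ T$ discontinuous, it picks $e_n$ with $\|e_n\|\le n^{-3}$ and $\|T\widehat{e_n}(y_0)\|>1$, then uses non-isolation of $y_0$ and continuity of each $T\widehat{e_n}$ to obtain a strictly monotone sequence $y_n\to y_0$ with $\|T\widehat{e_n}(y_n)\|>1$; the construction then shows directly that \emph{every} $\delta_y\circ T$ is continuous, so the Closed Graph argument needs no density step. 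You instead prove a hypothesis-free lemma (the bad set $D$ is finite for any compact $Y$), by accumulating infinitely many bad points to a limit $y_\ast$, and invoke the absence of isolated points only at the end to make $Y\setminus D$ dense. Your local step is therefore slightly more general as a standalone statement; the paper's version yields the sharper conclusion $D=\emptyset$ under the hypothesis and avoids the density argument.
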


\begin{proof} By Remark 4.6 and the definition of $J_{y}$ we deduce
the representation of $T$. We only need to prove that $T$ is
continuous if $Y$ has no isolated points. We will prove that
$\delta_{y}\circ T$ is continuous for every $y\in Y$, and then,
applying the Closed Graph Theorem in a similar way as in Theorem
3.6, we will deduce that $T$ is a continuous
map.\\
Suppose that there exists $y_{0}\in Y$ such that
$\delta_{y_{0}}\circ T$ is not continuous. Then we consider a
sequence $(e_{n})$ in $E$ such that $\|e_{n}\|\leq \frac{1}{n^{3}}$
and $\|T\hat{e_{n}}(y_{0})\|>1$, for all $n\in \mathbb{N}$. In this
way, we can find a sequence $(y_{n})$ in $Y$, strictly monotone and
converging to $y_{0}$, such that
$\|T\hat{e_{n}}(y_{n})\|>1$, for each $n\in \mathbb{N}$.\\
We now take disjoint open neighborhoods $U_{n}$ of $h(y_{n})$, for
each $n\in \mathbb{N}$, and define $f_{n}\in AC(X)$ such that
$f_{n}(h(y_{n}))=1,\ 0\leq f_{n}\leq 1$ and $c(f_{n})\subset U_{n}$,
for all $n\in \mathbb{N}$. Finally, we consider the function
$f:=\sum_{n=1}^{\infty}f_{n}e_{n}$ that belongs to $AC(X,E)$.\\ It
is obvious that $f(h(y_{0}))=0$ and, by Corollary 4.5,
$Tf(y_{0})=0$. On the other hand, $(f-\hat{e_{n}})(h(y_{n}))=0$ and
then $Tf(y_{n})=T\hat{e_{n}}(y_{n})$, for all $n\in \mathbb{N}$.
This implies that $\|Tf(y_{n})\|>1$, for each $n\in \mathbb{N}$, and
we obtain a contradiction since $Tf$ is continuous.
\end{proof}

\proof[Acknowledgements] The author wishes to thank the suggestions given
by the referee and Professor J. Araujo for his guidance in the development
of this paper.

\end{document}